\let\origsection=\section \def\section{\@ifstar{\origsection*}{\mysection}} 
\def\mysection{\@startsection{section}{1}\z@{.7\linespacing\@plus\linespacing}{.5\linespacing}{\normalfont\scshape\centering\S}}
\renewcommand{\PrintDOI}[1]{\doi{#1}}
\numberwithin{equation}{section}
\numberwithin{figure}{section}
\def\rmlabel{\upshape({\itshape \roman*\,})}
\def\alabel{\upshape({\itshape \alph*\,})}
\def\nlabel{\upshape({\itshape \arabic*\,})}
\let\polishlcross=\l
\def\l{\ifmmode\ell\else\polishlcross\fi}
\def\paragraph#1{%
  \noindent\textbf{#1.}\enspace}
\let\setminus=\smallsetminus
\def\moverlay{\mathpalette\mov@rlay}
\def\mov@rlay#1#2{\leavevmode\vtop{   \baselineskip\z@skip \lineskiplimit-\maxdimen
   \ialign{\hfil$\m@th#1##$\hfil\cr#2\crcr}}}
\newcommand{\charfusion}[3][\mathord]{
    #1{\ifx#1\mathop\vphantom{#2}\fi
        \mathpalette\mov@rlay{#2\cr#3}
      }
    \ifx#1\mathop\expandafter\displaylimits\fi}
\DeclareFontFamily{U}  {MnSymbolC}{}
\DeclareSymbolFont{MnSyC}         {U}  {MnSymbolC}{m}{n}
\DeclareFontShape{U}{MnSymbolC}{m}{n}{
    <-6>  MnSymbolC5
   <6-7>  MnSymbolC6
   <7-8>  MnSymbolC7
   <8-9>  MnSymbolC8
   <9-10> MnSymbolC9
  <10-12> MnSymbolC10
  <12->   MnSymbolC12}{}
\DeclareMathSymbol{\powerset}{\mathord}{MnSyC}{180}
\let\epsilon=\varepsilon
\let\rho=\varrho
\let\theta=\vartheta
\theoremstyle{plain}
\newtheorem{thm}{Theorem}[section]
\newtheorem{clm}[thm]{Claim}
\newtheorem{fact}[thm]{Fact}
\newtheorem{lem}[thm]{Lemma}
\theoremstyle{definition}
\newtheorem{dfn}[thm]{Definition}
\newtheorem{conj}[thm]{Conjecture}
\newcommand{\seq}[1]{\accentset{\rightharpoonup}{#1}}
\let\lra=\longrightarrow
\let\ra=\rightarrow
\let\phi=\varphi
\DeclareSymbolFont{stmry}{U}{stmry}{m}{n}
\DeclareMathSymbol\arrownot\mathrel{stmry}{"58}
\DeclareMathSymbol\Arrownot\mathrel{stmry}{"59}
\def\longarrownot{\mathrel{\mkern5.5mu\arrownot\mkern-5.5mu}}
\def\nlra{\longarrownot\longrightarrow}
\let\llra=\longleftrightarrow
\let\vn=\varnothing
\def\od{d^{+}}
\begin{document}

\title[Two disjoint cycles in digraphs]{Two disjoint cycles in digraphs}

\author[Miko\l aj Lewandowski]{Miko\l aj Lewandowski}
\address{Faculty of Computing and Telecommunications, Pozna\'n University of Technology, Pozna\'n, Poland}
\email{mikolaj.lewandowski@student.put.poznan.pl}

\author[Joanna Polcyn]{Joanna Polcyn}
\address{Faculty of Mathematics and Computer Science, Adam Mickiewicz University, Pozna\'n, Poland}
\email{joaska@amu.edu.pl}

\author[Christian Reiher]{Christian Reiher}
\address{Fachbereich Mathematik, Universit\"at Hamburg, Hamburg, Germany}
\email{Christian.Reiher@uni-hamburg.de}

\subjclass[2020]{Primary: 05C35.}
\keywords{cycles, digraphs, extremal graph theory.}

\begin{abstract} 
	Bermond and Thomassen conjectured that every digraph with minimum outdegree 
	at least $2k-1$ contains $k$ vertex disjoint cycles. So far the conjecture 
	was verified for $k\le 3$. Here we generalise the question asking for all 
	outdegree sequences which force $k$ vertex disjoint cycles and give the full 
	answer for $k\le 2$.
\end{abstract}

\maketitle

\setcounter{footnote}{1}

\section{Introduction}

In 1963, Corr\'adi and Hajnal~\cite{CH} proved that every undirected graph with at 
least $3k$ vertices and minimum degree at least $2k$ contains $k$ vertex disjoint 
cycles. In 1981, Bermond and Thomassen~\cite{BT} proposed an analogous conjecture for 
digraphs.

\begin{conj}\label{con}
	For every positive integer $k$ every digraph with minimum outdegree at least $2k-1$ 
	contains $k$ vertex disjoint cycles.
\end{conj}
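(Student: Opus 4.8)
The plan is to proceed by induction on $k$, the cases $k\le 3$ being already known. The base case $k=1$ is immediate: any digraph with minimum outdegree at least $1$ contains a cycle, obtained by following out-edges from any vertex until a repeat occurs. For the inductive step, assume the statement for $k-1$ and let $D$ satisfy $\delta^+(D)\ge 2k-1$. The cleanest way to close the induction would be to locate a single cycle $C$ whose removal barely disturbs the outdegrees: if $|V(C)|\le 2$, then deleting $V(C)$ lowers every outdegree by at most $2$, so $\delta^+(D-V(C))\ge 2k-3=2(k-1)-1$, and the inductive hypothesis supplies $k-1$ further disjoint cycles in $D-V(C)$, which together with $C$ finish the argument.

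The obstruction to this naive induction is that a digraph of large minimum outdegree need not contain any short cycle at all; its girth can be arbitrarily large. Controlling girth in terms of minimum outdegree is exactly the content of the Caccetta--H\"aggkvist conjecture, so one cannot expect the ``peel off a short cycle'' strategy to go through unconditionally. I would therefore reformulate the problem so as to dispose of one of its two difficulties---vertex-disjointness---for free. Choose for each vertex $v$ one out-neighbour $f(v)\in N^+(v)$; the resulting \emph{functional digraph} has outdegree exactly $1$ everywhere, so each of its weakly connected components contains a unique directed cycle, and these cycles are automatically pairwise vertex-disjoint. Hence it suffices to choose the selector $f$ so that the functional digraph it defines has at least $k$ cycles, and the hypothesis $\delta^+(D)\ge 2k-1$ gives at least $2k-1$ admissible values for each $f(v)$, which is the flexibility we must exploit.

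With disjointness handled, the plan is to establish the existence of a good selector by an extremal rerouting argument: among all selectors $f$ with $f(v)\in N^+(v)$, take one maximising the number of cycles, and assume for contradiction that it has at most $k-1$ of them. One then tries to reroute a bounded set of out-edges so as to create an additional cycle without destroying an existing one, using the surplus of out-neighbours per vertex to find the room. A more global realisation of the same idea is a partition argument: split $V$ into $k$ classes so that every vertex retains an out-neighbour inside its own class, whence each class induces a subdigraph of minimum outdegree $\ge 1$ and therefore a cycle, the $k$ cycles being disjoint since the classes are. A uniformly random partition makes a vertex ``bad'' with probability about $(1-1/k)^{2k-1}$, so this does not succeed at the sharp threshold; it does, however, yield disjoint cycles once the outdegree is a sufficiently large multiple of $k$, recovering a linear-in-$k$ bound, and the task becomes to drive the constant down to the conjectured value via derandomisation, iterative repair of the bad vertices, or a Lov\'asz Local Lemma type local correction.

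I expect the decisive obstacle to be precisely the tension already visible above: the sharp constant $2k-1$ leaves essentially no slack, so any argument that loses a constant factor---as random partitioning and most greedy extractions do---falls short, while the one route that is lossless in principle, peeling off short cycles, is blocked by the possibility of large girth and thus entangled with Caccetta--H\"aggkvist. My best hope for a complete proof would be to combine the two regimes through a dichotomy on a vertex-minimal counterexample: when a short cycle is present, peel it off and induct; when none is, the digraph is forced into a girth-constrained structure on which a Caccetta--H\"aggkvist-type bound, or a direct simultaneous construction of the functional selector, can be brought to bear. Making this dichotomy quantitatively tight is, I believe, where the real work lies, and it is the reason the conjecture remains open for $k\ge 4$.
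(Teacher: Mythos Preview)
This statement is a \emph{conjecture}, not a theorem: the paper does not prove it, and explicitly states that ``for all $k>3$ the problem is wide open.'' The paper's actual contribution is Theorem~\ref{thm:main}, which characterises outdegree sequences forcing two disjoint cycles and thereby recovers the case $k=2$ of the conjecture as a corollary; the cases $k\le 3$ are cited as prior work.

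Your proposal is not a proof either, and you appear to know this: you correctly identify the obstructions (no short-cycle guarantee, the link to Caccetta--H\"aggkvist, the constant-factor loss in random partitioning) and end by saying the conjecture remains open for $k\ge 4$. What you have written is a survey of plausible attack lines together with an explanation of why each one stalls at the sharp constant. That is reasonable commentary, but it does not establish the statement, and no amount of ``dichotomy on a minimal counterexample'' hand-waving closes the gap---the hard case is precisely when the girth is large and you have no short cycle to peel, and nothing in your outline supplies the missing structural ingredient there. In short: there is no proof in the paper to compare against, and your submission does not supply one.
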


They also noted that the complete digraph on $2k-1$ vertices shows that the bound 
offered by Conjecture~\ref{con} is optimal. For $k=1$ the problem is easy and the 
case $k=2$ was solved in 1983 by Thomassen~\cite{T}. More than two decades later 
Lichiardopol, P\'or, and Sereni~\cite{LPS} managed to solve the case $k=3$ 
and for all $k>3$ the problem is wide open. It is known, however, that 
Conjecture~\ref{con} holds for tournaments~\cite{BLS1, BLS2}.

The existence of some finite integer $f(k)$ such that every digraph of minimum outdegree 
at least $f(k)$ contains $k$ vertex disjoint cycles was established by Thomassen~\cite{T}. 
Later Alon~\cite{N} proved that it suffices to take $f(k)=64k$.
 
In this paper we generalise the question and ask for \emph{all} degree 
sequences which force the existence of $k$ vertex disjoint cycles. For instance, 
our main result shows that degree sequences 
of the form ${(1, 3, 3, 3, 4, 4, ...)}$ do not force $2$ disjoint cycles, 
but sequences of the form $(1, 3, 3, 3, 5, \dots)$ do. 

\begin{dfn} 
	For a nonempty and nondecreasing sequence $(d_1, \ldots, d_n)$ of nonnegative integers 
	and a positive integer $k$ the relation
	\[
		(d_1, \ldots, d_n)\lra k
	\]
	means that every directed graph on $n$ vertices with outdegree 
	sequence $(d_1, \ldots, d_n)$ 
	contains~$k$ vertex disjoint cycles. The failure of this statement is 
	indicated by
	\[
		(d_1, \ldots, d_n)\nlra k\,.
	\]
\end{dfn}

E.g., Conjecture \ref{con} asserts that $d_1\ge 2k-1$ implies~$(d_1, \dots, d_n)\lra k$. 
In analogy with Chvatal's well-known graph Hamiltonicity theorem one would hope that 
given $k$ there is a somewhat satisfactory characterisation of all outdegree sequences 
forcing $k$ disjoint cycles. Since minimum degree conditions are sometimes difficult 
to maintain in inductive proofs, one may even speculate whether such a strengthened form 
of Conjecture~\ref{con} could be easier to solve than the original question.     
For $k=1$ such a characterisation is easily obtained (see Lemma~\ref{lem:easy}). 
The case $k=2$ requires the following concept. 
 
\begin{dfn}
	Let integers $1\le r\le s\le n$ be given. A sequence  $(d_1,  \ldots, d_n)$ satisfying
	\begin{enumerate}[label=\alabel]
		\item\label{it:a} $d_r\ge r$, $d_s\ge s+1$, and 
		\item\label{it:b} if $n\ge 2s-r+2$ and $d_{2s-r+2}=s+1$, then there 
		is an integer $j\in [2s-r+3, n]$ such that $d_j\ge j$
	\end{enumerate} 
	is called \emph{$(r,s)$-large}. We say that  $(d_1, \ldots, d_n)$ is \emph{large} if it is $(r,s)$-large for some  two integers $r\le s$ in $[n]$.
\end{dfn}

Our main theorem says that being large and forcing two vertex disjoint cycles are equivalent
properties of outdegree sequences. 

\begin{thm}\label{thm:main}
	Let $(d_1,  \ldots, d_n)$ be a nonempty, nondecreasing sequence of nonnegative integers. 
	The relation
	\[
	(d_1,\ldots, d_n)\lra 2
	\]
	holds if and only if the sequence $(d_1, \ldots, d_n)$ is large.
\end{thm}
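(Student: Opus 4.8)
The plan is to prove both implications; write $D$ for a digraph on vertices $v_1,\dots,v_n$ with $\od(v_i)=d_i$. Two reductions frame everything. First, by Thomassen's theorem~\cite{T} every digraph of minimum outdegree at least $3$ contains two disjoint cycles, and one checks at once that every nondecreasing sequence with $d_1\ge 3$ is $(1,1)$-large; so both directions are immediate unless $d_1\le 2$. Second, by Lemma~\ref{lem:easy} a digraph with outdegree sequence $(d_1,\dots,d_n)$ is forced to contain a cycle exactly when $d_j\ge j$ for some $j$, and otherwise may be taken acyclic; hence if $d_j<j$ for all $j$ the sequence is neither large (part~\ref{it:a} cannot hold for any $r,s$) nor does it force even one cycle, and there is nothing to prove. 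From now on assume $d_1\le 2$ and $d_j\ge j$ for some $j$.

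For the direction ``not large $\Rightarrow (d_1,\dots,d_n)\nlra 2$'' I would construct, for each non-large sequence, a witness digraph in which every cycle is confined to one small strongly connected gadget $G$: $G$ is closed under out-arcs, contains a cycle, and has no two disjoint cycles, while each vertex outside $G$ lies in an acyclic ``tail'' whose arcs go only into $G$ or backwards within the tail, so that $D$ has no two disjoint cycles. The real content is to show that whenever the sequence is not large one can split the multiset $\{d_1,\dots,d_n\}$ into the outdegree sequence of such a gadget and a sequence realisable by such a tail; the size of the gadget is controlled by the quantities $r,s$ of the definition, and, when some pair $(r,s)$ satisfies part~\ref{it:a} but not part~\ref{it:b}, by the corresponding value $2s-r+2$. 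The gadgets that occur are small digraphs with a cycle but no two disjoint cycles --- the digon, the complete digraph $K_3$, the five-vertex digraph with outdegree sequence $(0,3,3,3,3)$ in which all cycles but one pass through a fixed vertex, and mild variants of these --- and checking that one with a prescribed admissible outdegree sequence exists is the delicate but elementary part.

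For the converse, ``large $\Rightarrow (d_1,\dots,d_n)\lra 2$'', take $D$ with an $(r,s)$-large sequence and $d_1\le 2$, and argue by induction on $n$ (or with a minimal counterexample): delete a suitably chosen vertex or a suitably chosen cycle $C$, and recover two disjoint cycles by applying the induction hypothesis, or Lemma~\ref{lem:easy}, to what remains. The pair $(r,s)$ is engineered for exactly this step: $d_r\ge r$ furnishes a cycle that can be kept small and low, while the stronger bound $d_s\ge s+1$ provides enough surplus outdegree that, after deleting at most $|V(C)|$ vertices and re-sorting, the outdegree sequence of $D-V(C)$ still has some entry $d'_j\ge j$, hence $D-V(C)$ contains a cycle. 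A natural choice is to take $C$ a shortest cycle --- being chordless it limits how many of its vertices an outside vertex can dominate, which is what makes the estimate of the surviving sequence go through --- falling back to a cycle minimising $|V(C)|$ inside a suitable initial segment if need be. Part~\ref{it:b} is precisely the patch for the one extremal configuration, $n\ge 2s-r+2$ with the sequence flat and equal to $s+1$ at position $2s-r+2$, in which this estimate comes up exactly one short; the index $j\ge 2s-r+3$ with $d_j\ge j$ promised by part~\ref{it:b} then supplies the missing cycle.

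I expect the converse to be the main obstacle. Producing one cycle is easy; the difficulty is that deleting $V(C)$ can lower many outdegrees simultaneously and re-sorting destroys the clean indexing, so $C$ (or the vertex to be removed) must be chosen with the surviving sequence already in mind, and the extremal configuration isolated by part~\ref{it:b} carries essentially no slack --- this is where the precise shape of the definition of ``large'' has to be matched exactly. A secondary difficulty, on the ``not large'' side, is organisational: one must attach to every shape of non-large sequence a correct gadget-plus-tail decomposition and verify in each case that the prescribed gadget digraph exists and genuinely contains no two disjoint cycles.
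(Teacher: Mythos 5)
Your proposal is a plan rather than a proof, and the two places you yourself flag as ``the delicate part'' are exactly where all the content lies; neither is carried out. For the sufficiency direction (large $\Rightarrow\ \lra 2$) your one concrete suggestion --- take a shortest cycle $C$, delete $V(C)$, and find a second cycle in the remainder via Lemma~\ref{lem:easy} --- fails already at the level of the first estimate: even when $C$ is chordless, a vertex outside $C$ may dominate \emph{every} vertex of $C$, so deleting $V(C)$ can lower all surviving outdegrees by $|V(C)|$ simultaneously, and the re-sorted sequence need not contain any entry $e_j\ge j$. You acknowledge this difficulty but offer no repair, and the repair is not a small patch. The paper's actual argument takes a counterexample minimising $|V|+|E|+s$, reduces to $r=1$, excludes loops, shows every $2$-cycle is dominated by a vertex of outdegree $s+1$, introduces an arc-contraction operation (Claim~\ref{clm:contract}) to prove that the inneighbourhood of \emph{every} vertex contains a cycle, then extracts a cycle $C$ all of whose vertices have outdegree $s+1$ together with a hub $x$ joined to all of $V(C)$ by $2$-cycles, and closes with a count of outdegree-$(s+1)$ vertices against Claim~\ref{clm:final}. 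None of these ideas appears in your sketch, and in particular the surplus coming from $d_s\ge s+1$ and from clause~\ref{it:b} is not used through a ``delete a cycle and re-index'' estimate at all.

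For the necessity direction your gadget-plus-tail picture has the right shape, but the finite list of gadgets you name (digon, $K_3$, a five-vertex digraph) cannot suffice: for each pair $r\le s$ violating~\ref{it:b} the extremal example needs a gadget on $s-r+3$ vertices, namely an induced cycle $C_{s-r+2}$ joined by $2$-cycles to a hub $v$ (Figure~\ref{fig:2}), so the gadget family is unbounded and its outdegrees are tied to $s$. Moreover $\nlra$ concerns the \emph{exact} outdegree sequence, so one must realise $(d_1,\dots,d_n)$ itself, not just the extremal sequence of the construction; this uses the minimality of $r$ and $s$ to bound each $d_i$ from above by the corresponding degree in Figure~\ref{fig:2} and then to delete arcs accordingly, a step your sketch does not address.
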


Note that every nondecreasing outdegree sequence $(d_1,\dots,d_n)$ with $d_1\ge 3=2\cdot 2-1$ 
is $(1,1)$-large. So the case $k=2$ of Conjecture~\ref{con} is a straightforward 
consequence of Theorem~\ref{thm:main}. It would be very interesting to find a similar 
result for three cycles. 

\subsection*{Notation}
A \emph{directed graph} $D$, also called a \emph{digraph}, is a pair $(V(D), E(D))$, 
where $V(D)$ is a set of vertices and $E(D)$ is a set of ordered pairs of vertices 
called \emph{arcs}. Digraphs considered here may have loops and 2-cycles, 
but no parallel arcs.
If an arc $x\ra y$ is present, we say that $x$ \emph{dominates} $y$, 
and $x$ \emph{dominates} a set $W\subseteq V(D)$ if it dominates each vertex of $W$. 
The \emph{inneighbourhood} of $x\in V(D)$ is the set of all vertices dominating $x$ 
and the number such vertices is called the \emph{indegree} of $x$. 
Similarly, the \emph{outneighbourhood} and \emph{outdegree} of $x$ are defined to be, 
respectively, the set of all vertices dominated by $x$ and the number of them.

If $U\subseteq V(D)$ we write $D[U]$ for the subdigraph of $D$ induced by $U$ 
and $D-U$ for the digraph obtained by deleting $U$ (and all arcs starting or 
ending in $U$).

By a cycle we always mean a directed cycle, that is an oriented path starting and ending at 
the same vertex. A cycle of length $\ell$ is an \emph{$\ell$-cycle}. 
A 1-cycle is a \emph{loop}.  In what follows, the outdegree sequence of every digraph is 
nondecreasing. 

A \emph{transitive tournament $T_n$} is a digraph
whose vertex set can be enumerated such that $V(T_n)=\{v_1, v_2, \dots, v_n\}$ and
$
E(T_n) = \{v_i\ra v_j\colon  {v_i, v_j\in V(T_n)}  \textrm{ and }i>j\}.
$

\section{Preliminary results}

We begin by describing all outdegree sequences of vertices of a digraph that force the existence of a cycle. 

\begin{lem}\label{lem:easy}
	The statement $(d_1, \ldots, d_n)\lra 1$ is true if and only if for 
	some $j\in [n]$ the inequality $d_j\ge j$ holds.
\end{lem}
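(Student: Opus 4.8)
The plan is to prove both implications by comparing the sorted outdegree sequence with the positions of vertices in a topological ordering.

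For the \emph{if} direction I would argue by contraposition. Suppose a digraph $D$ on $n$ vertices with outdegree sequence $(d_1,\dots,d_n)$ has no cycle. Since loops are $1$-cycles and $2$-cycles are cycles, $D$ is loopless and acyclic in the usual sense, so it admits an enumeration $V(D)=\{w_1,\dots,w_n\}$ in which every arc $w_i\ra w_j$ has $i>j$; consequently the outdegree of $w_i$ is at most $i-1$. I would then convert this into a statement about the sorted sequence: fix $j\in[n]$ and note that a vertex of outdegree at least $j$ must occupy a position $i$ with $i-1\ge j$, hence $i\ge j+1$, so there are at most $n-j$ such vertices. But $d_j,d_{j+1},\dots,d_n$ are $n-j+1$ entries, so by monotonicity not all of them can be $\ge j$, which forces $d_j\le j-1<j$. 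Thus $d_j<j$ for every $j$, and the contrapositive says precisely that if $d_j\ge j$ for some $j$ then every digraph with this outdegree sequence contains a cycle, i.e.\ $(d_1,\dots,d_n)\lra 1$.

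For the \emph{only if} direction I would construct, under the assumption that $d_j\le j-1$ for all $j\in[n]$, an explicit cycle-free digraph realising the sequence. Take vertices $v_1,\dots,v_n$ and let $v_j$ dominate exactly $v_1,\dots,v_{d_j}$; this is well defined because $d_j\le j-1$, so all these targets have index smaller than $j$. The resulting digraph is a subdigraph of the transitive tournament $T_n$, hence has no loop and no directed cycle, while $v_j$ has outdegree $d_j$ and the sequence $(d_1,\dots,d_n)$ is nondecreasing as required. This witnesses $(d_1,\dots,d_n)\nlra 1$, completing the equivalence.

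I do not expect a real obstacle here. The only mildly delicate point is the counting step that upgrades ``the vertex in position $i$ has outdegree at most $i-1$'' to ``the $j$-th smallest outdegree is at most $j-1$'', but this is a routine rearrangement/pigeonhole argument; everything else is either the standard existence of a topological order for acyclic digraphs or a direct construction inside $T_n$.
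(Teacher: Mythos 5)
Your proof is correct. The converse direction (realising any sequence with $d_j\le j-1$ for all $j$ by an acyclic digraph) matches the paper's intent, and in fact you spell out explicitly the sub-tournament construction that the paper only gestures at via the single extremal example $(0,1,\dots,n-1)$. For the forward direction the routes differ: the paper argues directly, deleting the $j-1$ vertices of smallest outdegree and noting that the remaining digraph has minimum outdegree at least $d_j-(j-1)\ge 1$ and hence contains a cycle by following arcs; you argue by contraposition, using a topological ordering of an acyclic digraph together with a pigeonhole step that converts the positional bound ``the $i$-th vertex has outdegree at most $i-1$'' into the bound $d_j\le j-1$ on the sorted sequence. These rest on complementary formulations of the same elementary fact (acyclicity is equivalent to topological orderability; minimum outdegree at least $1$ forces a cycle), so neither gains generality. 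The paper's version is a little shorter and its vertex-deletion trick is reused repeatedly in the proof of the main theorem, while yours has the mild virtue of unifying both implications as a single comparison with the transitive tournament; your counting step is carried out correctly.
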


\begin{proof}	
The transitive tournament $T_n$ exemplifies $(0, 1, \dots, n-1)\nlra 1$ and, 
therefore, the relation $(d_1, \ldots, d_n) \lra 1$ entails $d_j \ge j$ for some $j\in [n]$.
	
If, conversely, $d_j\ge j$ holds for some $j\in [n]$, we delete the vertices with outdegrees $d_1, \ldots, d_{j-1}$. The minimum outdegree of the remaining digraph is at least $1$ and,
hence, it contains a cylce.
\end{proof}

Next we show that deleting one term from a sequence cannot destroy its largeness. 

\begin{fact}\label{f:good}
	Let integers $1\le r\le s<n$ be given. If a nondecreasing sequence ${\seq d= (d_1,\ldots,d_n)}$ with $d_n<n$ is $(r,s)$-large, then every sequence $\seq e= (e_1, \ldots, e_{n-1})$ obtained from $\seq d$ by deleting one arbitrary term is also $(r,s)$-large.
\end{fact}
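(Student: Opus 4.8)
The goal is to show that deleting a single term from an $(r,s)$-large sequence $\seq d=(d_1,\dots,d_n)$ with $d_n<n$ preserves $(r,s)$-largeness. Say we delete the term in position $i$, obtaining $\seq e=(e_1,\dots,e_{n-1})$; explicitly $e_t=d_t$ for $t<i$ and $e_t=d_{t+1}$ for $t\ge i$. The plan is to verify conditions \ref{it:a} and \ref{it:b} for $\seq e$ by a case analysis on how $i$ compares with the relevant thresholds $r$, $s$, and $2s-r+2$.

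First I would dispose of condition \ref{it:a}. We need $e_r\ge r$ and $e_s\ge s+1$. If $i>s$, then $e_r=d_r\ge r$ and $e_s=d_s\ge s+1$, so there is nothing to do. If $i\le r$, then $e_r=d_{r+1}\ge d_r\ge r$ (using monotonicity) and similarly $e_s=d_{s+1}\ge d_s\ge s+1$. The intermediate range $r<i\le s$ is the one mildly delicate point for \ref{it:a}: here $e_r=d_r\ge r$ still holds directly, while $e_s=d_{s+1}\ge d_s\ge s+1$ again follows from monotonicity. So \ref{it:a} is essentially automatic, the only mechanism being that shifting indices down can only increase (never decrease) the entry sitting in a fixed position.

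The substance is in condition \ref{it:b}, which concerns the threshold position $m:=2s-r+2$ and the entries at positions $\ge m$ in the shortened sequence. Note $\seq e$ has length $n-1$, so \ref{it:b} for $\seq e$ is only non-vacuous when $n-1\ge m$, i.e.\ $n\ge m+1$. I would split according to where $i$ lies relative to $m$. If $i\ge m+1$: the prefix of $\seq e$ up to position $m$ agrees with that of $\seq d$, so $e_m=d_m$, and if $e_m=s+1$ then $\seq d$'s hypothesis in \ref{it:b} fires, giving some $j\in[m+1,n]$ with $d_j\ge j$; one then checks that either $j<i$ (so $e_j=d_j\ge j$, done) or $j\ge i$ — but $j\ge i$ would force $j\ge m+1$ and $e_{j-1}=d_j\ge j>j-1$, so $j-1\in[m+1,n-1]$ witnesses \ref{it:b} for $\seq e$ (note $j-1\ge i\ge m+1$). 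If $i\le m$: now $e_m=d_{m+1}\ge d_m$. Since $d_n<n\le$ (nothing forces largeness via a large final entry), we have two subcases. If $e_m>s+1$, the hypothesis of \ref{it:b} for $\seq e$ is simply not met, so \ref{it:b} holds vacuously. If $e_m=s+1$, then $d_{m+1}=s+1$, and by monotonicity $d_m=s+1$ too (as $d_m\le d_{m+1}=s+1$ and $d_m\ge d_s\ge s+1$); hence $\seq d$'s condition \ref{it:b} applies and yields $j\in[m+1,n]$ with $d_j\ge j$. Because all of $d_{m+1},\dots$ past the constant stretch must eventually exceed $s+1$, such a $j$ satisfies $d_j\ge j\ge m+1>s+1$, and then $e_{j-1}=d_j\ge j>j-1$ with $j-1\in[m+1,n-1]$ (using $j\ge m+2$, which holds since $d_{m+1}=s+1<m+1$ rules out $j=m+1$). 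This furnishes the required witness for $\seq e$.

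I expect the main obstacle to be bookkeeping rather than conceptual: one must carefully track the $\pm1$ shift in indices caused by the deletion and make sure that, whenever $\seq d$ supplies a witness index $j$, the corresponding position in $\seq e$ (either $j$ or $j-1$) still lies in the admissible range $[2s-r+3,n-1]$ and still satisfies the $d_j\ge j$-type inequality after the shift. The key structural facts that make every case go through are monotonicity (a fixed position can only gain value under deletion) and the observation that in the ``constant stretch'' scenario $d_{m+1}=s+1$ the witness $j$ from $\seq d$ automatically satisfies $j\ge m+2$, so shifting it down to $j-1$ keeps it safely above the threshold $m+1=2s-r+3$.
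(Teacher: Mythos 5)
Your overall strategy is sound and condition \ref{it:a} is handled correctly, but the case analysis for \ref{it:b} has a genuine gap in the subcase where the deleted position $i$ coincides with the witness $j$ supplied by \ref{it:b} for $\seq d$. Writing $m=2s-r+2$ and working in your case $i\ge m+1$, you treat ``$j\ge i$'' uniformly and assert $e_{j-1}=d_j$; but this identity requires $j-1\ge i$, i.e.\ $j>i$. When $j=i$ (the witness term itself is the one deleted) one has $e_{j-1}=d_{j-1}$, which need not be $\ge j-1$, and moreover $j-1$ may fall below the threshold $m+1=2s-r+3$. Concretely, take $r=s=1$ and $\seq d=(2,2,2,4,4)$, which is $(1,1)$-large with unique witness $j=4$; deleting the fourth term gives $\seq e=(2,2,2,4)$, and your proposed witness $j-1=3$ fails ($e_3=2<3$ and $3<2s-r+3=4$), while the actual witness is position $4$ itself, since $e_4=d_5=4\ge 4$. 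This is not merely cosmetic: the $j=i$ configuration is exactly where the hypothesis $d_n<n$ must be invoked (to guarantee $j\le n-1$, so that position $j$ still exists in $\seq e$ and $e_j\ge d_j\ge j$), and your write-up never genuinely uses that hypothesis --- the one sentence mentioning $d_n<n$ trails off. Without it the statement is false: $(2,2,2,4)$ is $(1,1)$-large, yet $(2,2,2)$ is not large at all.

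The repair is exactly the paper's (much shorter) argument, and it makes the index-shifting case analysis unnecessary: since $e_t\ge d_t$ for every $t\in[n-1]$, condition \ref{it:a} is immediate; for \ref{it:b} one notes $s+1\le d_s\le d_{2s-r+2}\le e_{2s-r+2}=s+1$, obtains $j\in[2s-r+3,n]$ with $d_j\ge j$ from \ref{it:b} applied to $\seq d$, deduces $j\le n-1$ from $d_n<n$, and concludes with $e_j\ge d_j\ge j$ at the \emph{same} position $j$.
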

\begin{proof}
	Since  $s\le n-1$ and $e_i\ge d_i$ for all $i\in [n-1]$, the sequence $\seq e$ satisfies  \ref{it:a}. 
	To show that it also satisfies \ref{it:b}, assume $n-1 \ge 2s -r+2$ and $e_{2s-r+2} = s+1$. This implies 
	$
	s+1\le  d_s\le d_{2s-r+2} \le e_{2s-r+2} = s+1,
	$
	and therefore, in view of \ref{it:b} applied to $\seq d$, there is an 
	integer $j\in [2s-r+3, n]$ such that $d_j\ge j$. Because of $d_n< n$ we infer $j\le n-1$, which yields $e_j\ge d_j\ge j$, as required. 
\end{proof}

\section{Proof of Theorem \ref{thm:main}} 

	Our goal in this section is to establish Theorem \ref{thm:main}. 
	Let us first assume ${(d_1, \ldots, d_n)\lra 2}$. 
	The digraph depicted in Figure~\ref{fig:1} shows $(1,\ldots,n)\not\lra 2$. 
	
		\begin{figure}[h!]
		\begin{tikzpicture}[scale = 1]		
			\filldraw[black] (0,0) circle (2pt) {};
			\node (T) at (3,0) [draw,ultra thick] {\Large $\mathbf T_{n-1}$};
			\draw[thick, -] (0,.1) arc (0:301:.2);
			\draw[thick, -stealth] (-.1,-.07) -- (-0.07,-.058);
			\draw[thick, -stealth] (0.09,-.07) to[out=-30,in=200]   (2.05,-.15);
			\draw[thick, stealth-] (0.08,.07) to[out=30,in=160]   (2.05,.15);
			\node (v) at (0,-.3) {$v$};			
		\end{tikzpicture}
		\caption{A transitive tournament $T_{n-1}$ joined by  2-cycles to a single vertex $v$ with a loop. An arc from/to a box goes from/to every vertex of the box. Each directed cycle in this digraph goes through $v$.}
		\label{fig:1}
	\end{figure}
	
	Thus there is a smallest number $s\in [n]$ such that $d_s\ge s+1$. Now the set
	\[
		X=\{i\in [s]\colon d_i\ge i\}
	\]
	cannot be empty and there exists $r=\min(X)$. 	
	The numbers $r$ and $s$ obey condition~\ref{it:a}. In case $n\ge 2s-r+2$ the graph 
	shown in Figure \ref{fig:2} exemplifies
	\[
		(\textcolor{green!50!black}{0, 1, \ldots, r-2,}\textcolor{blue!60!black}{ r, r+1, \ldots, s-1,}
		\textcolor{red!50!black}{\underbrace{s+1, \ldots, s+1}_{s-r+3},} 2s-r+2, \ldots, n-1)\not\lra 2
	\]
 and thus they cannot violate~\ref{it:b}.
	
	\begin{figure}[h!]
	\begin{tikzpicture}[scale = 1]		
		\draw[black, dashed, very thick] (-3.5,-1.2) rectangle (2.3,2.7);
		\node (r1) at (5.5,1.5) [green!40!black,draw,ultra thick] {\Large $\mathbf T_{r-1}$};
		\filldraw[red!50!black] (1.5,1.5) circle (2pt) {};
		\node (sr) at (-2,1.5) [blue!40!black,draw,ultra thick] {\Large $\mathbf T_{s-r}$};
		\node (sr2) at (0,0) [red!50!black, draw,ultra thick] {\Large$\mathbf C_{s-r+2}$};
		\node (nsr) at (5.5,-.15) [draw,ultra thick] {\Large $\mathbf T_{n-2s+r-2}$};
		\draw[thick, shorten <=2pt,shorten >=2pt,-stealth] (sr2) -- (sr);
		\draw[thick, shorten <=2pt,shorten >=2pt,-stealth] (sr) -- (1.4,1.5);
	
		\draw[thick, -stealth] (1.4,1.43)to[out=215,in=70]   (sr2);
		\draw[thick, stealth-](1.5,1.4) to[out=250,in=40]   (sr2);	
		
		\draw[thick,shorten <=1pt, -stealth] (nsr) -- (2.4,-.15);
		\draw[thick,shorten <=1pt,shorten >=1pt, -stealth] (nsr) -- (r1);
		\draw[thick, shorten >=1pt,-stealth] (2.4,1.5) -- (r1);
		
		\node at (5.5, 2.1) {\tiny $\textcolor{green!50!black}{(0,1,\dots, r-2)}$};
		\node at (5.4, 2.4) {\tiny $\textcolor{green!50!black}{(d_1,d_2,\dots, d_{r-1})}$};
		\node at (1.5, 1.8) {\tiny $\textcolor{red!50!black}{(\mathbf{s+1})}$};
		\node at (1.5, 2.1) {\tiny $\textcolor{red!50!black}{(\mathbf{d_{2s-r+2}})}$};
		\node at (0,-.65) {\tiny $\textcolor{red!50!black}{(\mathbf{s+1}, s+1, \dots, s+1)}$};
		\node at (0.3,-.95) {\tiny $\textcolor{red!50!black}{(\mathbf{d_s},\  \ d_{s+1}, \dots, d_{2s-r+1})}$};
		\node at (-2,2.1) {\tiny $\textcolor{blue!60!black}{(\mathbf{r},r+1, \dots, s-1)}$};
		\node at (-2.1,2.4) {\tiny $\textcolor{blue!60!black}{(\mathbf{d_r},d_{r+1}, \dots, d_{s-1})}$};
		\node at (5.5,-0.8) {\tiny $(2s-r+2, 2s-r+3, \dots, n-1)$};
		\node at (5.5,-1.1) {\tiny $(d_{2s-r+3}, \ \ d_{2s-r+4},\ \,\dots, \ \ d_n)$};
		\node at (1.7, 1.3) [red!50!black]{$v$};		
	\end{tikzpicture}
	\caption{A digraph $D$ obtained from three transitive tournaments, $T_{r-1}$, $T_{s-r}$, $T_{n-2s+r-2}$, one directed cycle $C_{s-r+2}$ and one single vertex $v$. An arc from/to a box goes from/to every vertex of the box. Each directed cycle different from $C_{s-r+2}$ in $D$ goes through $V(C_{s-r+2})$ and $v$. The outdegree sequence of vertices of $D$ is described.}
	\label{fig:2}
\end{figure}

	To verify the converse direction, we consider a counterexample, given by a directed
	graph $D=(V, E)$ on $n$ vertices whose outdegree sequence $(d_1, \ldots, d_n)$ 
	is $(r,s)$-large for two integers $r\le s$ in $[n]$,
	but which fails to contain two disjoint cycles. We may assume that the triple $(D, r, s)$ 		has been chosen in such a way that $|V|+|E|+s$ is as small as possible. 
	The desired contradiction will emerge after nine preliminary claims.

	\begin{clm} 
		We have $r=1$.
	\end{clm}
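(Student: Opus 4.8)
The plan is to argue by contradiction, using the minimality of $|V|+|E|+s$. Suppose $r\ge 2$. Then by the definition of $r=\min X$, we have $d_i<i$ for all $i\in[r-1]$; in particular $d_1<1$, so $d_1=0$, meaning $D$ has a vertex $v$ of outdegree $0$. Such a vertex lies on no cycle, so deleting it cannot destroy the property of having (or not having) two disjoint cycles: $D-v$ is still a counterexample to the statement we want, having $n-1$ vertices. The first step is therefore to record that $D-v$ has outdegree sequence $(d_2,\ldots,d_n)$ — note that deleting a vertex of outdegree $0$ does not change the outdegrees of the remaining vertices — and to check that this shorter sequence is still large, so that $(D-v,\,r',\,s')$ is a legitimate smaller counterexample, contradicting minimality.

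The key computation is to see which parameters witness largeness of $(d_2,\ldots,d_n)$, which I will reindex as $(e_1,\ldots,e_{n-1})$ with $e_i=d_{i+1}$. Here I would invoke Fact~\ref{f:good}: it says that deleting one term from a nondecreasing $(r,s)$-large sequence $\seq d$ with $d_n<n$ yields an $(r,s)$-large sequence, provided $s<n$. So I first need $s<n$ — but this is clear, since $d_s\ge s+1$ forces $d_n\ge s+1>s$, and a nondecreasing nonnegative sequence with $d_n>n-1$ would put us outside the regime where a counterexample can exist (indeed $d_n<n$ must hold, else $D$ contains a cycle through the vertex of outdegree $d_n$ together with a disjoint cycle — more carefully, one uses that the whole setup assumes $(d_1,\ldots,d_n)\not\lra 2$, and $d_n\ge n$ together with Lemma~\ref{lem:easy}-type reasoning would already be too strong; I would make this precise by noting $d_n<n$ follows from $D$ being a counterexample). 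Granting $s<n$ and $d_n<n$, Fact~\ref{f:good} applied to the deletion of the first term $d_1=0$ gives that $(e_1,\ldots,e_{n-1})=(d_2,\ldots,d_n)$ is $(r,s)$-large. But actually I want a smaller value of the third coordinate: I should check whether $(e_1,\ldots,e_{n-1})$ is $(r-1,s-1)$-large or similar, so that the new triple has strictly smaller $s$-parameter and the contradiction with minimality is sharper. In fact even keeping $s$ the same suffices for the contradiction, because $|V|+|E|$ strictly drops: deleting $v$ removes at least the vertex and loses no arcs incident to it in a way that increases the count. So the cleanest route is: $D-v$ is a counterexample with outdegree sequence that is $(r,s)$-large by Fact~\ref{f:good}, and $|V(D-v)|+|E(D-v)|+s<|V|+|E|+s$, contradicting the choice of $(D,r,s)$.

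The main obstacle I anticipate is the bookkeeping around the hypothesis $d_n<n$ needed to invoke Fact~\ref{f:good}, and making sure the deleted vertex really does have outdegree $0$ rather than merely $d_1<1$ being interpreted loosely — but $d_1<1$ with $d_1$ a nonnegative integer forces $d_1=0$, so this is immediate. A secondary subtlety is confirming that removing a source (outdegree-$0$ vertex) changes neither the set of cycles of $D$ nor the outdegrees of the surviving vertices; both are clear since no arc leaves $v$, hence $v$ is on no cycle and no surviving vertex had $v$ in its outneighbourhood. Once these points are settled, the minimality of $|V|+|E|+s$ is contradicted, forcing $r=1$.
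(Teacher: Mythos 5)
There is a genuine gap, and it sits at the heart of your argument: you assert that deleting the outdegree-$0$ vertex $v$ ``does not change the outdegrees of the remaining vertices'' because ``no surviving vertex had $v$ in its outneighbourhood.'' That is false. A vertex of outdegree $0$ can have arbitrarily many \emph{in}neighbours, and deleting $v$ removes every arc $w\ra v$, decreasing the outdegree of each such $w$ by one. (For instance, if every other vertex dominates $v$, every surviving outdegree drops.) So the outdegree sequence of $D-v$ is in general \emph{not} $(d_2,\ldots,d_n)$; all you can guarantee is $e_k\ge d_{k+1}-1$. This breaks the application of Fact~\ref{f:good}, which presumes the new sequence is obtained by deleting one term, not by deleting a term and then possibly decrementing the others. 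You briefly consider checking $(r-1,s-1)$-largeness instead, which is the right instinct, but you then discard it in favour of the incorrect ``outdegrees are unchanged'' route. A further, smaller issue: Fact~\ref{f:good} needs $d_n<n$, which at this stage of the paper has not yet been established (it is Claim~\ref{clm:loop}, proved \emph{after} $r=1$); your sketch of why $d_n<n$ holds is not a proof.

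The paper's argument avoids both problems. It deletes all $r-1$ vertices of outdegrees $d_1,\ldots,d_{r-1}$ at once, observes that each surviving outdegree drops by at most $r-1$, so the new sequence satisfies $e_k\ge d_{k+r-1}-(r-1)$, and then verifies directly (without Fact~\ref{f:good}) that this sequence is $(1,s-r+1)$-large: condition~\ref{it:a} follows from $e_1\ge d_r-(r-1)\ge 1$ and $e_{s-r+1}\ge d_s-(r-1)\ge s-r+2$, and condition~\ref{it:b} transfers from the original sequence after the index shift. Since the third coordinate drops from $s$ to $s-r+1$ and $|V|+|E|$ also decreases, minimality is contradicted. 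If you want to keep your one-vertex-at-a-time flavour, you would need to show that $(e_1,\ldots,e_{n-1})$ is $(r-1,s-1)$-large after accounting for the possible drop of one in each outdegree, and then iterate; this amounts to the paper's computation done in $r-1$ steps.
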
 

	\begin{proof}
		Otherwise delete the vertices with degrees $d_1, \ldots, d_{r-1}$ from $D$, thus 
	obtaining a smaller digraph $D^\star$. Observe that for every $r \le j \le n$ there are at least $n-(j-1)$ vertices in $D^\star$ with outdegree at least $d_{j}-(r-1)$. Therefore, the nondecreasing outdegree sequence $(e_1, \ldots, e_{n-r+1})$ of $D^\star$ has 
	the property $e_k\ge d_{k+r-1}-(r-1)$ holds for every $k\in [n-r+1]$.
	Consequently, $(D^\star, 1, s-r+1)$ is another counterexample to our claim that 
	contradicts the minimality of $(D, r, s)$. 
	\end{proof}

	Notice that now our conditions~\ref{it:a} and~\ref{it:b} read
	\begin{enumerate}[label=\rmlabel]
		\item\label{it:1} $d_1\geq 1$, $d_s\ge s+1$, and 
		\item\label{it:2} if $n\ge 2s+1$ and $d_{2s+1}=s+1$, then there is an 
			integer $j\in [2s+2, n]$ such that $d_j\ge j$.
	\end{enumerate} 
	Exploiting the minimality of $|E|$ we infer
	\begin{enumerate}[label=\rmlabel, resume]
		\item\label{it:3} If $i\in [s-1]$, then $d_i=1$.
		\item\label{it:4} If $i\in [s, \min(2s, n)]$, then $d_i=s+1$.
	\end{enumerate}

	\begin{clm}\label{clm:loop}
		There are no loops in $D$. In particular, $d_n < n$. 
	\end{clm}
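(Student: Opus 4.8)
The plan is to argue by contradiction. Suppose $D$ contains a loop at some vertex $v$. The first observation is that $D-v$ must then be acyclic, since a loop is a $1$-cycle and any cycle in $D-v$ would avoid $v$, producing two disjoint cycles in $D$.

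Next I would compare the nondecreasing outdegree sequence of $D-v$, say $(e_1,\dots,e_{n-1})$, with that of $D$. For an upper bound: because $D-v$ witnesses $(e_1,\dots,e_{n-1})\nlra 1$, Lemma~\ref{lem:easy} gives $e_k\le k-1$ for every $k\in[n-1]$. For a lower bound: by monotonicity $D$ has at least $n-s+1$ vertices of outdegree at least $d_s$ (those occupying positions $s,\dots,n$); deleting $v$ removes at most one of them and lowers the outdegree of every surviving vertex by at most $1$, so $D-v$ has at least $n-s$ vertices of outdegree at least $d_s-1$, and among the $n-1$ vertices of $D-v$ this forces $e_s\ge d_s-1$.

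To reach a contradiction I would first note that $s\le n-1$: if $s=n$, condition~\ref{it:1} would require $d_n\ge n+1$, which is impossible since no vertex of an $n$-vertex digraph has more than $n$ outneighbours. Then the two bounds together with $d_s\ge s+1$ yield $s-1\ge e_s\ge d_s-1\ge s$, which is absurd; hence $D$ has no loop. Consequently no vertex dominates itself, so every outdegree is at most $n-1$, and in particular $d_n<n$. The one point requiring a little care is the lower bound $e_s\ge d_s-1$, where one must track how deleting a single vertex affects the sorted outdegree sequence, but I anticipate no genuine difficulty there.
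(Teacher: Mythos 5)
Your proof is correct and follows essentially the same route as the paper: delete the loop vertex, note that outdegrees drop by at most one so that position $s$ of the remaining sorted sequence is still at least $s$ (using $d_s\ge s+1$ and $s\le n-1$), and invoke Lemma~\ref{lem:easy} to produce a second cycle. The only difference is cosmetic — you phrase the application of Lemma~\ref{lem:easy} contrapositively (acyclicity forces $e_k\le k-1$) rather than exhibiting the cycle directly.
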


	\begin{proof}
		Assume that $D$ possesses a loop at some vertex $x$. Then by~\ref{it:1} 
		the directed graph $D^\star=D- x$ has at most $s-1$ vertices whose outdegree is 
		smaller than $s$ and, moreover, as $n\ge d_n\ge s+1$, it has a further vertex whose outdegree is at 
		least $s$. So by Lemma~\ref{lem:easy}~$D^\star$ contains a cycle, 
		which together with the loop at $x$ yields two disjoint cycles in $D$.
	\end{proof}

Combining the above claim with \ref{it:1} one gets $s+1\le d_s\le d_n< n$, and thereby
\begin{equation}\label{eq:sn}
	n \ge s+2.
\end{equation}

	\begin{clm}\label{clm:dom} 
		Every $2$-cycle of $D$ is dominated by a vertex of outdegree $s+1$.
	\end{clm}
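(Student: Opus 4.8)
The plan is to argue indirectly. Suppose some $2$-cycle $C$ of $D$, say on the vertices $x$ and $y$, is not dominated by any vertex of outdegree $s+1$. I would first pass to $D^\star = D - \{x,y\}$: if $D^\star$ contained a cycle, then this cycle together with $C$ would be two disjoint cycles of $D$, which is impossible. So $D^\star$ is acyclic, and Lemma~\ref{lem:easy} applied to it (it has $n-2$ vertices) shows that its nondecreasing outdegree sequence $(e_1,\ldots,e_{n-2})$ satisfies $e_j\le j-1$ for all $j\in[n-2]$; consequently $D^\star$ has at most $n-2-s$ vertices of outdegree at least $s$.

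On the other hand, I would bound this same quantity from below. By~\ref{it:1} we have $d_s\ge s+1$, so the $n-s+1$ vertices occupying positions $s, s+1,\ldots,n$ in the sorted outdegree sequence of $D$ all have outdegree at least $s+1$ in $D$. Deleting $x$ and $y$ destroys at most two of these, leaving at least $n-s-1$ of them in $D^\star$. For such a surviving vertex $v$ one distinguishes two cases: if $d_D(v)\ge s+2$ then $v$ loses at most two outneighbours, so $d_{D^\star}(v)\ge s$; and if $d_D(v)=s+1$ then $v$ does not dominate both $x$ and $y$ --- this is precisely where the assumed failure of domination enters --- so $v$ loses at most one outneighbour and again $d_{D^\star}(v)\ge s$. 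Hence $D^\star$ has at least $n-s-1$ vertices of outdegree at least $s$.

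These two estimates are contradictory, since $n-s-1 > n-2-s$; note that $t = s$ is a legitimate threshold here because $n\ge s+2$ by~\eqref{eq:sn}, so that $n-2-s\ge 0$. This contradiction proves the claim. The argument is short, and the only step demanding real attention is the outdegree bookkeeping for the two kinds of surviving vertices: it is exactly this step that forces the hypothesis that $C$ be undominated by a vertex of outdegree $s+1$, as a vertex of outdegree $s+1$ dominating both $x$ and $y$ could drop to outdegree $s-1$ in $D^\star$ and spoil the count; everything else is a routine application of Lemma~\ref{lem:easy}.
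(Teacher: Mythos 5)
Your proof is correct and follows essentially the same route as the paper: both arguments rest on the observation that, because no vertex of outdegree $s+1$ dominates both $x$ and $y$, every surviving vertex of outdegree at least $s+1$ in $D$ still has outdegree at least $s$ in $D-\{x,y\}$, and then both invoke Lemma~\ref{lem:easy} to produce a cycle disjoint from $x\llra y$. The paper phrases the count positively (at most $s-1$ vertices of outdegree below $s$ plus at least one of outdegree at least $s$, so the $s$-th term of the sorted sequence is at least $s$), whereas you phrase it contrapositively via the acyclicity bound $e_j\le j-1$; the two tallies are equivalent.
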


	\begin{proof}
		Note that \eqref{eq:sn} together with \ref{it:1} tell us that $D$ has at least three vertices whose outdegrees are at least $s+1$. If $x\llra y$ is 
		a $2$-cycle in $D$ not dominated by any vertex of outdegree $s+1$, 
		then for the same reason as before the digraph $D-\{x, y\}$ has to contain a cycle. 
	\end{proof}

	\begin{clm}\label{clm:contract}
		Suppose that an arc $x\ra y$ of $D$ does not appear in a $2$-cycle. 
		\begin{enumerate}[label=\nlabel]
			\item\label{it:con1} There is some vertex $a\not\in\{x, y\}$ dominating $x$ and $y$.
			\item\label{it:con2} If the outdegree of $x$ is $1$, then at least $s+1$ 
				vertices distinct from $y$ and having outdegree $s+1$ dominate $\{x, y\}$.
		\end{enumerate}
	\end{clm}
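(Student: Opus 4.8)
I would derive both parts from a single \emph{arc-contraction}. Since the arc $x\to y$ lies in no $2$-cycle, the arc $y\to x$ is absent, so, using also that $D$ has no loops by Claim~\ref{clm:loop}, we have $\on(y)\subseteq V\setminus\{x,y\}$. Let $D^\star$ be the digraph on $\bigl(V\setminus\{x,y\}\bigr)\cup\{m\}$ obtained by keeping every arc inside $V\setminus\{x,y\}$ and giving the new vertex $m$ the inneighbourhood $\bigl(\ine(x)\cup\ine(y)\bigr)\setminus\{x,y\}$ and the outneighbourhood $\on(y)$. The point of this asymmetric choice is a \emph{lifting property}: a cycle of $D^\star$ avoiding $m$ is already a cycle of $D$, while a cycle of $D^\star$ that enters $m$ along an arc $u\to m$ and leaves along an arc $m\to v$ lifts to a cycle of $D$ by replacing $m$ either by the path $u\to x\to y\to v$, when $u$ dominates $x$ in $D$, or by the path $u\to y\to v$, when it does not (then $u$ dominates $y$ because $u\in\ine(x)\cup\ine(y)$; in both cases $y\to v$ because $v\in\on(y)$). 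Hence two disjoint cycles in $D^\star$ would lift to two disjoint cycles in $D$, which is impossible; so it suffices, in each part, to produce an integer $s^\star$ for which the outdegree sequence of $D^\star$ is $(1,s^\star)$-large and $|V(D^\star)|+|E(D^\star)|+s^\star<|V|+|E|+s$, because then the minimality of $(D,r,s)$ forces $D^\star$ to contain two disjoint cycles.

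\paragraph{Degree bookkeeping} Write $A=\{a\in V\setminus\{x,y\}\colon a\to x \text{ and } a\to y\}$. A direct check shows that, on passing to $D^\star$, the vertex $m$ acquires outdegree $\od(y)$, every $w\in A$ loses exactly one unit of outdegree (its two arcs to $x$ and $y$ collapsing into the single arc $w\to m$), and every other vertex of $V\setminus\{x,y\}$ retains its outdegree. So the sorted outdegree sequence of $D^\star$ arises from $(d_1,\dots,d_n)$ by deleting one copy of the value $\od(x)$ and then lowering by $1$ exactly the $|A|$ terms equal to the outdegrees of the vertices of $A$. By~\ref{it:3} and~\ref{it:4} every $d_i$ is either $1$ or at least $s+1$, and since each $a\in A$ has $\od(a)\ge2$, it satisfies $\od(a)\ge s+1$; thus the $|A|$ lowered terms are $\ge s+1$ before the lowering and $\ge s$ after.

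\paragraph{Proof of~\ref{it:con1}} Argue by contradiction: if no vertex outside $\{x,y\}$ dominates both $x$ and $y$, then $A=\varnothing$, so the outdegree sequence of $D^\star$ is just $(d_1,\dots,d_n)$ with the single term $\od(x)$ deleted. Since $d_n<n$ by Claim~\ref{clm:loop} and $s<n$ by~\eqref{eq:sn}, Fact~\ref{f:good} shows this sequence is still $(1,s)$-large; moreover $|V(D^\star)|+|E(D^\star)|+s=(n-1)+(|E|-\od(x))+s<|V|+|E|+s$ because $\od(x)\ge1$. Hence $s^\star=s$ settles this part.

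\paragraph{Proof of~\ref{it:con2}} Assume $\od(x)=1$. Since the outdegrees of $D$ are all $1$ or $\ge s+1$, the vertex $x$ is counted among the $d_i$ equal to $1$, forcing $s\ge2$. Suppose, for contradiction, that fewer than $s+1$ vertices of outdegree $s+1$ dominate $\{x,y\}$, i.e., $|A_1|\le s$ where $A_1=\{a\in A\colon\od(a)=s+1\}$. By the bookkeeping above, the sorted outdegree sequence $(e_1,\dots,e_{n-1})$ of $D^\star$ consists of $s-2$ ones, then $|A_1|$ copies of $s$, then terms all $\ge s+1$. In particular $e_1\ge1$ and $e_{s-1}\ge s$, so~\ref{it:a} holds for $(1,s-1)$; and since $|A_1|\le s$ the block of $s$'s ends at position $s-2+|A_1|<2s-1$, so either $n-1<2s-1$ or $e_{2s-1}\ge s+1\ne s$, and in either case~\ref{it:b} is vacuous for $(1,s-1)$. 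Thus the outdegree sequence of $D^\star$ is $(1,s-1)$-large, while $|V(D^\star)|+|E(D^\star)|+(s-1)=(n-1)+(|E|-1-|A|)+(s-1)<|V|+|E|+s$; hence $s^\star=s-1$ works. The step I expect to be the main obstacle is choosing the contraction correctly: the asymmetric definition of the in- and outneighbourhoods of $m$ is exactly what makes every cycle through $m$ liftable, whereas the naive symmetric identification of $x$ and $y$ does not have this property. With the lifting property in hand, the degree count and the two largeness checks above are routine.
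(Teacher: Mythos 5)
Your construction of $D^\star$ is the paper's contraction in disguise: the paper deletes $x$ and redirects every arc $z\to x$ to a new arc $z\to y$, which yields exactly your digraph after renaming $y$ as $m$, and both proofs then run the same minimality argument (Fact~\ref{f:good} for part~\ref{it:con1}, and $(1,s-1)$-largeness of the contracted sequence for part~\ref{it:con2}). The proposal is correct and essentially identical to the paper's proof, merely spelling out the cycle-lifting and the largeness check for $(1,s-1)$ in more explicit detail.
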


	\begin{proof}
		We construct a digraph $D^\star$ from $D-x$ by adding all arcs of the form $z\ra y$, $z\in V(D^{\star})\setminus \{y\}$, whenever $z\ra y\notin E(D)$ and $z\ra x \in E(D)$ (see Figure \ref{fig:Gstar}).
	Plainly~$D^\star$ cannot contain two disjoint cycles. Observe that the only vertices whose outdegree is smaller in $D^\star$ than in $D$ are those dominating $\{x,y\}$. Their outdegree dropped by one. 
	
	\begin{figure}[h!]
	\begin{tikzpicture}[scale = .4]	
		\coordinate (x) at (-6,2.5);
		\coordinate (y)	at (-3,2.5);
		\coordinate (z) at (-4.5,0);		
		\foreach \i in {x,y,z}
		\fill (\i) circle (3pt);						
		\node at ($(x)+(-.3,.3)$) {  $x$};
		\node at ($(y)+(.4,.3)$) { $y$};
		\node at ($(z)+(.3,-.3)$) { $z$};		
		\node at (-7,.5) {\Large $D$};
		\node at (2,.5) {\Large $D^{\star}$};		
		\draw [ thick,shorten <=2pt,shorten >=2.5pt,-stealth] (x) to[out=20,in=160] (y);
		\draw [ thick,shorten <=2pt,shorten >=2.5pt,-stealth] (z) to[out=140,in=280] (x);		
		\coordinate (x1) at (3,2.5);
		\coordinate (y1)	at (6,2.5);
		\coordinate (z1) at (4.5,0);		
		\foreach \i in {y1,z1}
		\fill (\i) circle (3pt);		
		\fill [black!20!white](x1) circle (3pt);				
		\node [black!20!white] at ($(x1)+(-.3,.3)$) {  $x$};
		\node at ($(y1)+(.4,.3)$) {$y$};
		\node at ($(z1)+(.3,-.3)$) { $z$};		
		\node at (0,1.2) {\LARGE $\Rightarrow$};	
		\draw [thick,shorten <=2pt,shorten >=2.5pt,-stealth] (z1) to[out=40,in=260] (y1);	
	\end{tikzpicture}
	\caption{New arcs in $D^{\star}$.}
	\label{fig:Gstar}
\end{figure}

	If~\ref{it:con1} fails, then the outdegree sequence of $D^\star$ is the same as that 
	of $D$ with the term corresponding to $x$ removed and thus, in view of Fact \ref{f:good} and Claim \ref{clm:loop}, it is $(r,s)$-large, meaning that $(D^\star, r, s)$ contradicts the supposed minimality of $(D, r,s)$. 
	
	Let us now suppose that $x$ has outdegree $1$. Due to~\ref{it:4} this is only possible 
	if $s>1$. By~\ref{it:3} the digraph $D^\star$ has $s-2$ vertices whose outdegree is $1$, 
	while~\ref{it:4} shows that each other vertex has the outdegree at least $s$. 
	Now the only possibility for the triple $(D^\star, 1, s-1)$ not to contradict the 
	supposed minimality of $(D, 1, s)$ is that clause~\ref{it:2} fails, which means, 
	in particular, that $n-1\ge 2(s-1)+1$, i.e., $n\geq 2s$, and that $d_{2s-1}=s$ meaning $D^\star$ has $(2s-1)-(s-2)=s+1$ 
	vertices of outdegree $s$. In $D$ these vertices need to have outdegree $s+1$ and 
	thus they dominate $\{x, y\}$. This proves~\ref{it:con2}. 
	\end{proof}

	\begin{clm}\label{clm:incycle}
		The inneighbourhood of every vertex of $D$ contains a cycle.
	\end{clm}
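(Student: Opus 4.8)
To prove Claim~\ref{clm:incycle}, the plan is to argue by contradiction. Suppose that for some vertex $v\in V$ the inneighbourhood $W=N^-(v)$ induces an acyclic subdigraph of $D$, where the degenerate possibility $W=\emptyset$ is allowed. The strategy is to exploit the acyclicity of $D[W]$ to produce either a strictly smaller counterexample, contradicting the minimality of $|V|+|E|+s$, or a configuration that one of the earlier claims forbids.

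First I would dispose of the case $W=\emptyset$. Here $v$ has indegree $0$, so deleting $v$ does not decrease the outdegree of any remaining vertex; hence the outdegree sequence of $D-v$ is obtained from $(d_1,\dots,d_n)$ by deleting exactly the term equal to the outdegree of $v$. Since $d_n<n$ by Claim~\ref{clm:loop} and $s<n$ by \eqref{eq:sn}, Fact~\ref{f:good} shows that this shorter sequence is still $(r,s)$-large. As $D-v$ still contains no two disjoint cycles, the triple $(D-v,r,s)$ would contradict the assumed minimality of $(D,r,s)$.

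So assume $W\neq\emptyset$. Being acyclic, $D[W]$ has a source $w$, that is, a vertex of $W$ that no vertex of $W$ dominates. Consider the arc $w\to v$. If the reverse arc $v\to w$ is also present, then $\{v,w\}$ spans a $2$-cycle, which by Claim~\ref{clm:dom} is dominated by some vertex $z$ of outdegree $s+1$; since $D$ has no loops we have $z\notin\{v,w\}$, and $z\to v$ forces $z\in W$, so the arc $z\to w$ contradicts the choice of $w$. If instead $v\to w$ is absent, then $w\to v$ lies in no $2$-cycle, and Claim~\ref{clm:contract}\ref{it:con1} supplies a vertex $a\notin\{w,v\}$ dominating both $w$ and $v$; again $a\to v$ puts $a$ in $W$, so $a\to w$ contradicts $w$ being a source. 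Both cases being impossible, $D[W]$ must contain a cycle.

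The argument is short, and the only point needing a little care is checking that the hypotheses of Fact~\ref{f:good} hold in the case $W=\emptyset$, namely $s<n$ (which is precisely \eqref{eq:sn}) and $d_n<n$ (which is Claim~\ref{clm:loop}). I do not anticipate a genuine obstacle; the substance is entirely in the bookkeeping that lets Claims~\ref{clm:dom} and~\ref{clm:contract} collide with the acyclicity of $D[W]$.
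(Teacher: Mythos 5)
Your proof is correct and follows essentially the same route as the paper: the empty-inneighbourhood case is handled by deleting the vertex and invoking Fact~\ref{f:good} together with Claim~\ref{clm:loop}, and the nonempty case takes a source of the acyclic induced subdigraph and derives a contradiction from Claim~\ref{clm:dom} (when the arc to $v$ lies in a $2$-cycle) or Claim~\ref{clm:contract}\ref{it:con1} (when it does not). The only cosmetic difference is that the paper phrases the existence of a source via Lemma~\ref{lem:easy} rather than via acyclicity directly.
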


	\begin{proof}
		Let $y$ denote any vertex of $D$ and let $N$ be its inneighbourhood. 

		Assume first that $N=\varnothing$. Then the outdegree sequence of $D^\star=D- y$ 
		is obtained from that of $D$ by removing the term corresponding to $y$ and, therefore, in view of Fact \ref{f:good} and Claim \ref{clm:loop}, is $(1,s)$-large. Thus, the 
		triple $(D^\star, 1, s)$ contradicts the minimality of $(D, 1, s)$.

		Thereby we have shown $N\ne\vn$. By Lemma~\ref{lem:easy} our claim can only fail 
		if some vertex $x$ of $D[N]$ has no inneighbours in that graph. By Claim~\ref{clm:dom} 
		this is only possible if there is no arc from $y$ to $x$, but this 
		contradicts Claim~\ref{clm:contract}\ref{it:con1}.
	\end{proof}

	\begin{clm}\label{clm:2ex}
		There is a $2$-cycle in $D$.
	\end{clm}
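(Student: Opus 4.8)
The plan is to argue by contradiction: assume $D$ has no $2$-cycle, so that $D$ is an oriented graph, and produce two disjoint cycles. Since $d_1\ge 1$ by~\ref{it:1}, $D$ contains a cycle; fix a shortest one, $C$, and put $\ell=|V(C)|$. As $D$ has no loops (Claim~\ref{clm:loop}) and no $2$-cycles, $\ell\ge 3$. Because $D$ has a vertex of outdegree $d_n\ge s+1\ge 2$ while every vertex has outdegree $1$ inside $C$, we get $V(C)\ne V$, i.e.\ $n>\ell$.

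Next I would establish a rigidity statement. Since $D$ has no two disjoint cycles, the digraph $H:=D-V(C)$ is acyclic; being nonempty it has a source $w$, and all its inneighbours lie in $V(C)$, so $\ine(w)\subseteq V(C)$. On the other hand, Claim~\ref{clm:incycle} says $\ine(w)$ contains a cycle, which — $\ell$ being the girth of $D$ — has at least $\ell$ vertices; as $|V(C)|=\ell$ this forces $\ine(w)=V(C)$, i.e.\ every vertex of $C$ dominates $w$. Moreover no vertex of outdegree $1$ can lie on $C$: its unique outneighbour would be its $C$-successor, hence in $V(C)$, yet it would also equal $w\notin V(C)$. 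So by~\ref{it:3} every vertex of $C$ has outdegree at least $s+1$.

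Now pick an outneighbour $z$ of $w$. It cannot lie in $V(C)$, for then $z\ra w$ and $w\ra z$ would form a $2$-cycle; hence $z\in V(H)$ and $w$ is not a sink of $H$. Following outarcs inside the acyclic digraph $H$ starting from $w$, we reach along a path $P$ a sink $u\ne w$ of $H$, so $\on(u)\subseteq V(C)$. Choosing $c\in\on(u)$ and using $c\ra w$, we obtain a cycle $Z:=u\ra c\ra w\ra\cdots\ra u$, with tail $P$, that meets $V(C)$ in the single vertex $c$.

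The remaining task — and what I expect to be the main obstacle — is to turn this near miss into an honest pair of disjoint cycles, or else to contradict the minimality of $(D,1,s)$; I would pursue the latter. Delete $w$ from $D$: the result still has no two disjoint cycles, its outdegree sequence is obtained from that of $D$ by deleting one term and lowering by $1$ each of the $\ell$ (automatically large) terms indexed by $V(C)$, and since $\id(w)=\ell$ we have $|E(D-w)|=|E|-\od(w)-\ell$, so $(D-w,1,s')$ would contradict minimality for every parameter $s'$ with $s'\le s+\ell+\od(w)$. It then remains to check that this new sequence is large for some such $s'$. When $\od(w)=1$ this is quick: the decremented block already begins at position $s-1$ with value $s$, witnessing $(1,s-1)$-largeness unless the tail clause~\ref{it:2} fails for $D-w$; the exceptional configuration then forced — $\ell\ge s+1$, at least $s+1$ vertices of $C$ of outdegree exactly $s+1$, and failure of~\ref{it:2} for $D-w$ — is where the real work lies, one having to play the assumed smallness of the tail of $D-w$ against clause~\ref{it:2} for $D$ itself and against~\eqref{eq:sn}. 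The case $\od(w)\ge 2$ (which in particular subsumes $s=1$) is handled in parallel. Everything preceding this last step is routine.
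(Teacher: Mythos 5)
Your set-up is sound as far as it goes: the shortest cycle $C$ is chordless, $H=D-V(C)$ is acyclic with a source $w$, Claim~\ref{clm:incycle} forces $\ine(w)=V(C)$, no outdegree-$1$ vertex lies on $C$, and the cycle $Z$ through $c$, $w$ and a sink $u$ of $H$ exists. But the proof is not complete, and you say so yourself: the decisive step --- ruling out the ``exceptional configuration'' in which the sequence of $D-w$ fails clause~\ref{it:2}, as well as the entire case $\od(w)\ge 2$ --- is deferred with the remark that this ``is where the real work lies.'' That is precisely the content of the claim, so as it stands this is a plan with a genuine gap, not a proof. Concretely, after deleting $w$ the $\ell$ vertices of $C$ drop from outdegree $\ge s+1$ to $\ge s$, and when at least $s+1$ of them have outdegree exactly $s+1$ in $D$ (so $\ell\ge s+1$) and the tail of the new sequence is small, none of the pairs $(1,s-1)$, $(1,s)$ you gesture at is obviously large; you would need a separate argument (in effect a version of Claim~\ref{clm:final}, which in the paper is only proved later) to dispose of it, and you have not supplied one.

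You also missed a much shorter route, which is the one the paper takes: if $D$ has no $2$-cycles and, by Claim~\ref{clm:loop}, no loops, then Claim~\ref{clm:incycle} forces \emph{every} vertex to have indegree at least $3$. Reversing all arcs preserves both $|V|+|E|$ and the property of containing two disjoint cycles, and turns this into minimum outdegree $\ge 3$; deleting one arbitrary arc then yields a digraph $D^\star$ with outdegree sequence satisfying $e_1\ge 2$ and $e_2\ge 3$, which is $(1,1)$-large, while $|V|+|E^\star|+1<|V|+|E|+s$. The minimality of $(D,r,s)$ then gives two disjoint cycles in $D^\star$, hence in $D$ --- no case analysis on $C$, $w$, or the shape of the degree sequence of $D-w$ is needed. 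The observation that arc reversal is available because the counter $|V|+|E|+s$ is insensitive to it is the idea your approach lacks.
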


	\begin{proof}
		Otherwise, in view of Claim~\ref{clm:incycle}, each vertex of $D$ has indegree at 
		least $3$. Let $D^\star=(V, E^\star)$ be a directed graph arising from $D$ by 
		first reversing 
		all arrows and then deleting an arbitrary arc. Its outdegree sequence 
		$(e_1, \ldots, e_n)$ has the properties $e_1\ge 2$ and $e_3\ge e_2\ge 3$ and thus it is $(1,1)$-large.
		Since $|V|+|E^\star|+1 < |V|+|E|+s$, the minimality of $(D, r, s)$ tells us 
		that $D^\star$ and thus also $D$ contains two disjoint cycles, which is 
		a contradiction. 
	\end{proof}

	\begin{clm}
		In $D$ there are a directed cycle $C$ all of whose vertices have outdegree $s+1$ 
		and a vertex $x\not\in V(C)$ connected to every vertex of $C$ by a $2$-cycle.
	\end{clm}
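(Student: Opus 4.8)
The plan is to reformulate the statement and then carry it out in two stages. Writing $N^2(v):=N^{+}(v)\cap N^{-}(v)$ for the set of vertices joined to $v$ by a $2$-cycle and $T:=\{v\in V\colon d^{+}(v)=s+1\}$, the point is that, since $D$ is loopless by Claim~\ref{clm:loop}, it suffices to find a vertex $x$ for which $D[T\cap N^2(x)]$ contains a cycle: such a cycle misses $x$, has all its vertices in $T$, and is joined to $x$ by $2$-cycles. So the first stage is to produce $x$ together with a cycle in $D[N^2(x)]$, disregarding $T$, and the second stage is to push that cycle into $T$.

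For the first stage I would start from a $2$-cycle $\{p,q\}$ — which exists by Claim~\ref{clm:2ex} — and its dominator $a\in T$, which exists by Claim~\ref{clm:dom}. Since $D$ has no two disjoint cycles, the cycle that Claim~\ref{clm:incycle} places inside any in-neighbourhood $N^{-}(v)$ must intersect $\{p,q\}$; applied to $v=a$ this forces $p\to a$ or $q\to a$. If both arcs are present then $\{a,p,q\}$ is a triangle of $2$-cycles and the first stage is done; otherwise, say $p\to a$ holds but there is no arc $q\to a$, so that $\{a,p\}$ is a $2$-cycle and hence, by Claim~\ref{clm:dom} again, has a dominator $a_1\in T$ with $a_1\to a$ and $a_1\to p$. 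Iterating — each step using that $\{a_i,p\}$ is a $2$-cycle to produce $a_{i+1}\in T$ with $a_{i+1}\to a_i$ and $a_{i+1}\to p$ — I would build a chain $\cdots\to a_2\to a_1\to a$ of vertices of $T$ all lying in $N^2(p)$; since $V$ is finite the chain must repeat a vertex, and the resulting cycle lies in $D[T\cap N^2(p)]$, finishing the proof outright. The one delicate point is that $\{a_i,p\}$ is a $2$-cycle only when $p\to a_i$; when instead the arc $a_i\to p$ is one-way, I would apply Claim~\ref{clm:contract}\ref{it:con1} to it to obtain a common dominator of its endpoints and feed this back into the construction, having chosen $\{p,q\}$, $a$, and the chain so as to minimise a suitable extremal quantity (for instance a shortest, hence chordless, cycle in the relevant in-neighbourhood) so that this case can be excluded; in the degenerate situation in which some vertex in play has outdegree $1$, Claim~\ref{clm:contract}\ref{it:con2} instead yields $s+1$ vertices of $T$ dominating a common $2$-cycle, a structure that can likewise be exploited to produce the configuration.

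For the second stage I would take a pair $(x,C)$ with $C$ a cycle in $D[N^2(x)]$ and $|V(C)|$ minimal. Each $c\in V(C)$ has two distinct out-neighbours — its successor on $C$ (which lies on $C$, hence differs from $x$) and $x$ itself — so $d^{+}(c)\ge 2$, and therefore $d^{+}(c)\ge s+1$ by \ref{it:3} and \ref{it:4}. If some $c\in V(C)$ had $d^{+}(c)\ge s+2$, I would delete from $D$ an out-arc of $c$ other than the two just mentioned: the resulting digraph still has no two disjoint cycles, and by the outdegree-sequence bookkeeping already used to prove \ref{it:4} — together with Fact~\ref{f:good} and Claim~\ref{clm:loop} — its outdegree sequence is still $(1,s)$-large, contradicting the minimality of $|V|+|E|+s$. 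Hence $V(C)\subseteq T$, which proves the claim. I expect the first stage to be the real obstacle: Claim~\ref{clm:contract} never produces a $2$-cycle, only a common dominator of the two ends of a one-way arc, so forcing the cycle into a $2$-cycle-neighbourhood hinges on setting up the extremal choice precisely enough that the chain closes up before it can escape; given that, the second stage is comparatively routine.
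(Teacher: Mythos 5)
There is a genuine gap, and you have located it yourself: the first stage does not close. Your chain construction needs every new dominator $a_{i+1}$ of the $2$-cycle on $\{a_i,p\}$ to satisfy $p\to a_{i+1}$, but the cycle that Claim~\ref{clm:incycle} places in the inneighbourhood of $a_{i+1}$ only has to meet $\{a_i,p\}$, so the alternative $a_i\to a_{i+1}$ is equally possible; that creates a $2$-cycle on $\{a_i,a_{i+1}\}$ avoiding $p$, and you have no mechanism to exclude it or to restart from it. The ``suitable extremal quantity'' is never specified, and Claim~\ref{clm:contract}\ref{it:con1} only returns a common dominator of the two ends of a one-way arc, never a $2$-cycle, so it cannot repair the step. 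The idea you are missing is the paper's structural dichotomy on the undirected graph $G$ whose edges are the $2$-cycles of $D$: since $D$ has no two disjoint cycles, $G$ has no two independent edges, hence $G$ is a star plus isolated vertices or a triangle plus isolated vertices. In the star case the centre $x$ lies on \emph{every} $2$-cycle, and then your chain argument becomes exactly the paper's and closes instantly: a dominator $b$ of the $2$-cycle on $\{a,x\}$ with $a\to b$ would create a $2$-cycle missing $x$, which is now impossible, so $x\to b$ is forced, every element of $A=N^{+}(x)\cap N^{-}(x)$ has an inneighbour in $B=A\cap T$, and $D[B]$ contains the desired cycle by Lemma~\ref{lem:easy}. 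The triangle case is handled separately by a short argument that either exhibits the required $C$ as one side of the triangle or produces two disjoint $2$-cycles via Claim~\ref{clm:dom}.

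Your second stage is also unjustified at the decisive point. Deleting an out-arc of a vertex $c$ with $d^{+}(c)\ge s+2$ need not preserve largeness: by \ref{it:4} such a $c$ occupies a position beyond $2s$ in the sorted sequence, and lowering its outdegree from $s+2$ to $s+1$ can turn condition \ref{it:2} from vacuous into active and false. For instance, the sequence of length $2s+1$ consisting of $s-1$ ones followed by $s+2$ copies of $s+1$ is not large, although it arises by exactly such a deletion from the large sequence ending $(\dots,s+1,s+2)$. The bookkeeping behind \ref{it:4} does not transfer, because there the modified vertex sits at position at most $2s$, which keeps $d_{2s+1}$ intact; and Fact~\ref{f:good} concerns deleting a term, not decreasing one. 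The paper avoids both difficulties by extracting $C$ directly inside the set of outdegree-$(s+1)$ vertices rather than upgrading a cycle found elsewhere.
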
  

	\begin{proof} 
		Let us first assume that $D$ has a vertex $x$ belonging to all $2$-cycles. Define
		\begin{align*}
			A&=\{z\in V\colon z\ne x \text{ and } x\llra z\text{ is a $2$-cycle}\} \\
			\text{ and }
			B&=\{z\in A\colon \text{the outdegree of $z$ is $s+1$}\}.
		\end{align*}
		Claim~\ref{clm:2ex} informs us that $A\ne\vn$. 
		Consider any $a\in A$. By Claim~\ref{clm:dom} the $2$-cycle $a\llra x$ is dominated 
		by some vertex $b\not\in\{a,x\}$ whose outdegree is $s+1$. The inneighbourhood of~$b$ 
		contains some cycle by Claim~\ref{clm:incycle}. This cycle cannot be disjoint 
		to $a\llra x$ and thus there is an arc from $a$ or $x$ to $b$. The former is 
		impossible, for then the $2$-cycle $a\llra b$ would not pass through~$x$ and for 
		this reason we must have $b\in B$ (see Figure \ref{fig:axb}). 

	\begin{figure}[h!]
	\begin{tikzpicture}[scale = .6]
		\coordinate (x) at (-6,-2.5);
		\coordinate (a)	at (-3,-2.5);
		\coordinate (b) at (-4.5,0);		
		\foreach \i in {x,a,b}
		\fill (\i) circle (3pt);						
		\node at ($(x)+(-.35,-.1)$) { $x$};
		\node at ($(a)+(1.45,-.1)$) {$a\quad a \in A$};
		\node at ($(b)+(2.7,.25)$) { $b\quad \od_D(b)=s+1$};		
		\draw [ thick,shorten <=3pt,shorten >=3pt,-stealth] (x) to[out=40,in=150] (a);
		\draw [ thick,shorten <=3pt,shorten >=3pt,-stealth] (b) to[out=200,in=100] (x);
		\draw [ thick,shorten <=3pt,shorten >=3pt,-stealth] (b) to[out=-10,in=80] (a);		
		\draw [  thick,shorten <=3pt,shorten >=3pt,-stealth] (a) to[out=210,in=-30] (x);
		\draw [ shorten <=3pt,shorten >=3pt,-stealth] (x) to[out=60,in=270] (b);
	\end{tikzpicture}
	\caption{A 2-cycle $a\llra x$ and a vertex $b$ dominating $\{a,x\}$.}
	\label{fig:axb}
\end{figure}

		We have thus shown that every member of $A$ has an inneighbour in $B\subseteq A$. 
		In particular, $B\ne\vn$ and in the restriction $D[B]$ every vertex has indegree at 
		least $1$. So, in view of Lemma \ref{lem:easy}, there is some cycle $C$ in $B$ and the claim follows in this
		case.

		From now on we may assume that $D$ does not have a vertex belonging to all of 
		its $2$-cycles. This means that the undirected graph $G=(V,E_G)$ the 
		edges of which correspond to the $2$-cycles of $D$ is not the disjoint union of 
		a (possibly degenerate) star and isolated vertices. As $G$ cannot have two 
		independent edges either, it has to be the disjoint union of a triangle and isolated 
		vertices. 
		In other words, there are three distinct vertices $x$, $y$, and~$z$ of~$D$ such 
		that $x\llra y$, $x\llra z$, and $y\llra z$ are $2$-cycles. 

		If both $y$ and $z$ have outdegree $s+1$ the claim still holds with $y\llra z$ 
		playing the r\^{o}le of $C$. So let us finally assume that the outdegree of $y$ 
		is not $s+1$. Applying Claim~\ref{clm:dom} to $x\llra z$ we get a vertex 
		$a\not\in\{x, y, z\}$ dominating $x$ and $z$. Since the cycle contained in 
		the inneighbourhood of $a$ has to intersect $\{x,z\}$ we may further suppose 
		that $x$ dominates $a$. But now $a\llra x$ and $y\llra z$ are two disjoint $2$-cycles 
		in $D$ (see Figure \ref{fig:triangle}).  
		
		\begin{figure}[h!]
		\begin{tikzpicture}[scale = .6]	
			\coordinate (x) at (-6,2.5);
			\coordinate (y)	at (-3,2.5);
			\coordinate (z) at (-4.5,0);
			\coordinate (a) at (-7,.3);			
			\foreach \i in {x,y,z,a}
			\fill (\i) circle (3pt);						
			\node at ($(x)+(-.3,.35)$) {  $x$};
			\node at ($(y)+(.4,.25)$) {  $y$};
			\node at ($(z)+(.3,-.3)$) {  $z$};
			\node at ($(a)+(-.4,.2)$) {  $a$};			
			\draw [ thick,shorten <=3pt,shorten >=4pt,-stealth] (x) to[out=40,in=150] (y);
			\draw [  thick,shorten <=3pt,shorten >=4pt,-stealth] (z) to[out=160,in=260] (x);
			\draw [ thick,shorten <=3pt,shorten >=4pt,-stealth] (y) to[out=280,in=20] (z);
			
			\draw [ thick,shorten <=3pt,shorten >=4pt,-stealth] (y) to[out=190,in=-10] (x);
			\draw [  thick,shorten <=3pt,shorten >=4pt,-stealth] (x) to[out=310,in=110] (z);
			\draw [ thick,shorten <=3pt,shorten >=4pt,-stealth] (z) to[out=70,in=230] (y);
			\draw [ thick,shorten <=3pt,shorten >=4pt,-stealth] (a) to[out=90,in=190] (x);
			\draw [ thick,shorten <=3pt,shorten >=4pt,-stealth] (a) to[out=-30,in=200] (z);
			\draw [ shorten <=3pt,shorten >=4pt,-stealth] (x) to[out=220,in=20] (a);	
		\end{tikzpicture}
		\caption{2-cycles $x\llra y$, $y\llra z$, $y\llra x$ in $D$ and a vertex $a$ dominating $\{x,z\}$.}
		\label{fig:triangle}
	\end{figure}
	
	\end{proof}

	From now on, we fix $C$ and $x$ as obtained by the previous claim. Observe that the 
	cycle~$C$ has to be induced, for otherwise $D$ would contain two disjoint cycles.

	\begin{clm}
		We have $s>1$.
	\end{clm}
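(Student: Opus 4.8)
The plan is to argue by contradiction, so suppose $s=1$. Then clause~\ref{it:1} forces every vertex of $D$ to have outdegree at least $d_s=s+1=2$, and clause~\ref{it:4} gives $d_1=d_2=2$. The preceding claim supplies a cycle $C=v_1v_2\cdots v_mv_1$, all of whose vertices have outdegree exactly $2$, together with a vertex $x\notin V(C)$ such that $x\llra v_i$ is a $2$-cycle for every $i\in[m]$; since $D$ is loopless (Claim~\ref{clm:loop}) we have $m\ge 2$, hence $n\ge m+1\ge 3$.

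The first step is purely local: I would pin down the outneighbourhoods along $C$. Each $v_i$ already emits the two arcs $v_i\to v_{i+1}$ (around $C$, indices mod $m$) and $v_i\to x$ (from the $2$-cycle $v_i\llra x$), and its outdegree is only $2$, so its outneighbourhood is exactly $\{v_{i+1},x\}\subseteq V(C)\cup\{x\}$. In particular no vertex outside $V(C)\cup\{x\}$ is dominated by a vertex of $C$; that is, $N^{-}(u)\cap V(C)=\varnothing$ for every $u\in V\setminus(V(C)\cup\{x\})$.

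This observation does most of the work. If such a vertex $u$ existed, then by Claim~\ref{clm:incycle} the digraph $D[N^{-}(u)]$ would contain a cycle, and this cycle, being confined to $N^{-}(u)$, would be vertex-disjoint from $C$ — contradicting the choice of $D$. Hence $V=V(C)\cup\{x\}$. It remains to rule out this degenerate configuration by a short degree-sequence computation: loopfreeness (Claim~\ref{clm:loop}) gives $N^{+}(x)\subseteq V(C)$, and since $x$ dominates all of $V(C)$ we get $\od(x)=m$, so $n=m+1$ and the outdegree sequence of $D$ is $(\underbrace{2,\dots,2}_{m},m)$. Here $n\ge 3=2s+1$ and $d_3=2=s+1$, so clause~\ref{it:2} would require an index $j\in[4,m+1]$ with $d_j\ge j$; but $d_j=2$ for $j\le m$ and $d_{m+1}=m<m+1$, so no such $j$ exists. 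This contradicts the $(1,1)$-largeness of $(d_1,\dots,d_n)$ and completes the argument.

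I do not anticipate a real obstacle. The two points that need a little care are phrasing the local observation for exactly the right vertex set $V\setminus(V(C)\cup\{x\})$, and remembering the degenerate case $V=V(C)\cup\{x\}$ — there Claim~\ref{clm:incycle} has no vertex outside $V(C)$ to apply to, and one genuinely needs the short sequence count to finish.
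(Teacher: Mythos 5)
Your proof is correct and takes essentially the same approach as the paper: the paper also splits into the degenerate case $V=V(C)\cup\{x\}$ (killed by the outdegree sequence $(2,\dots,2,n-1)$ and clause~\ref{it:2}) and the case of an outside vertex, where the cycle in that vertex's inneighbourhood forces some $z\in V(C)$ to have three outneighbours --- just the contrapositive of your observation that each $v_i$ has outneighbourhood exactly $\{v_{i+1},x\}$.
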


	\begin{proof}
		Assume $s=1$. If $D$ has no vertices besides $x$ and those on $C$, 
		then the outdegree sequence of $D$ has to be
		\[
			(\underbrace{2, \ldots, 2}_{n-1}, n-1)
		\]
		for some $n\ge 3$, contrary to~\ref{it:2}. So there is a vertex $y\ne x$ not lying 
		on $C$. By Claim~\ref{clm:incycle} the inneighbourhood of $y$ contains a cycle. 
		 This cycle needs to have a common vertex $z$ with $C$. But now $x$, $y$ and the 
		successor of $z$ on $C$ are three distinct outneighbours of $z$, contrary to the 
		fact that the members of $C$ have outdegree $s+1=2$ (see Figure \ref{fig:Cx}).   
		
		\begin{figure}[h!]
		\begin{tikzpicture}[scale = .9]	
			\foreach \i in {0,...,6} {
				\pgfmathsetmacro\s{72*\i+18};
				\coordinate (v\i) at (\s:1.5cm);
				\fill (v\i) circle (1.6pt);
			}	
			\foreach \i/\j in {0/1, 1/2, 2/3, 3/4, 4/5, 5/6}
			\draw [green!60!black ,thick, shorten <=3pt,shorten >=3pt,stealth- ](v\i) -- (v\j);
			\coordinate (x) at (0,0);
			\fill (x) [blue]circle (1.8pt);	
			\draw [shorten <=3pt, shorten >= 3pt, stealth-] (x) [out = 108, in = -105] to (v1);
			\draw [shorten <=3pt, shorten >= 3pt, -stealth] (x) [out = 72, in = -75] to (v1);		
			\draw [shorten <=3pt, shorten >= 3pt, stealth-] (x) [out = 180, in = -35] to (v2);
			\draw [shorten <=3pt, shorten >= 3pt, -stealth] (x) [out = 144, in = -5] to (v2);		
			\draw [shorten <=3pt, shorten >= 3pt, stealth-] (x) [out = 252, in = 40] to (v3);
			\draw [shorten <=3pt, shorten >= 3pt, -stealth] (x) [out = 216, in = 70] to (v3);		
			\draw [shorten <=3pt, shorten >= 3pt, stealth-] (x) [out = 324, in = 115] to (v4);
			\draw [shorten <=3pt, shorten >= 3pt, -stealth] (x) [out = 288, in = 145] to (v4);		
			\draw [thick, shorten <=3pt, shorten >= 3pt, stealth-] (x) [out = 36, in = 185] to (v0);
			\draw [shorten <=3pt, shorten >= 3pt, -stealth] (x) [out = 0, in = 215] to (v0);		
			\node at (-.3,.4) {$x$};
			\node at (1.55, .26) {$z$};	
			\coordinate (y) at (2.1,1.1);
			\fill  (y) [blue] circle (1.7pt);
			\node  at (2.2, .85) {$y$};
			\draw [ thick, ,shorten <=3pt,shorten >=3pt,stealth- ](y) -- (v0);			
			\fill (v4)[blue] circle (1.7pt);			
			\node [green!60!black] at (-1.6,-.5) {\Large $C$};
		\end{tikzpicture}
		\caption{The cycle \textcolor{green!60!black}{$C$} and the vertices $x$ and $y$.}
		\label{fig:Cx}
	\end{figure}
		
	\end{proof}

	\begin{clm}\label{clm:final}
		If $n\ge 2s+1$, then $d_{2s+1}\ge s+2$. In particular, $D$ has at most $s+1$ vertices 
		of outdegree $s+1$.
	\end{clm}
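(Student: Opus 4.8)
The plan is to argue by contradiction: assume $n\ge 2s+1$ but $d_{2s+1}=s+1$. This is the only possible failure mode, since monotonicity together with~\ref{it:1} already forces $d_{2s+1}\ge d_s\ge s+1$. Condition~\ref{it:2} then hands us an index $j\in[2s+2,n]$ with $d_j\ge j$, so $D$ possesses at least $n-j+1$ vertices of outdegree at least $d_j$, and moreover $d_j\ge j\ge 2s+2$.

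The one fact about the digraph I will exploit is that $D-V(C)$ is acyclic: a cycle avoiding $V(C)$ would be disjoint from $C$, contradicting that $D$ has no two disjoint cycles. Write $c=|V(C)|$, so that $D-V(C)$ has $n-c$ vertices. Applying the contrapositive of Lemma~\ref{lem:easy} to $D-V(C)$, its nondecreasing outdegree sequence $(f_1,\ldots,f_{n-c})$ satisfies $f_i\le i-1$ for every $i$; consequently, for every $m$ at most $(n-c)-m$ of its vertices have outdegree at least $m$.

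Now I combine the two observations. Each of the $n-j+1$ top-outdegree vertices of $D$ has outdegree exceeding $s+1$, hence is not a vertex of $C$ (whose vertices all have outdegree $s+1$), hence survives in $D-V(C)$, where its outdegree falls by at most $c$ and so remains at least $d_j-c$. If $d_j>c$, the displayed counting bound yields $n-j+1\le(n-c)-(d_j-c)=n-d_j$; if $d_j\le c$, it yields the cruder $n-j+1\le n-c\le n-d_j$ (these vertices already number at most $|V(D-V(C))|=n-c$). Either way $d_j\le j-1$, contradicting $d_j\ge j$, so $d_{2s+1}\ge s+2$. The ``in particular'' clause is then immediate from~\ref{it:3} and~\ref{it:4}: if $n\ge 2s+1$ the vertices of outdegree $s+1$ are precisely those in positions $s,\ldots,2s$, while if $s+2\le n\le 2s$ (the only other case, by~\eqref{eq:sn}) they are precisely those in positions $s,\ldots,n$, and in both cases there are at most $s+1$ of them.

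I expect the main point — and the thing that is easy to miss — to be that no finer analysis is required here: one is tempted to track how the abundant high-outdegree vertices attach to $C$ and to $x$, but the blunt estimate ``an acyclic digraph on $N$ vertices has at most $N-m$ vertices of outdegree at least $m$'' already collides with what condition~\ref{it:2} provides, and it uses nothing about $D$ beyond the existence of the cycle $C$.
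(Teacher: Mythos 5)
Your proof is correct, but it takes a genuinely different route from the paper. The paper stays within its minimal-counterexample framework: assuming $d_{2s+1}=s+1$, it invokes~\ref{it:2} to get $j\in[2s+2,n]$ with $d_j\ge j$, deletes the $s-1$ vertices of outdegree $1$, checks that the resulting outdegree sequence satisfies $e_1\ge 2$ and $e_{j-s+1}\ge j-s+1\ge 4$, and concludes that $(D^\star,1,1)$ is a smaller counterexample, contradicting the minimality of $(D,1,s)$ (this step silently uses $s>1$, established just before, so that the deletion strictly decreases $|V|+|E|+s$). You instead extract the same index $j$ from~\ref{it:2} but then argue structurally: since $D-V(C)$ is acyclic, the contrapositive of Lemma~\ref{lem:easy} bounds how many vertices of $D-V(C)$ can have large outdegree, and the $n-j+1$ vertices of outdegree at least $d_j\ge 2s+2$ (none of which lie on $C$, as all vertices of $C$ have outdegree exactly $s+1$) violate that bound, yielding $d_j\le j-1$ directly. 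Your case split on $d_j$ versus $c=|V(C)|$ is handled correctly, and your justification of the ``in particular'' clause via~\ref{it:3},~\ref{it:4} and~\eqref{eq:sn} matches what the paper leaves implicit. What each approach buys: the paper's argument is shorter and uniform with the rest of the proof but leans on the global minimality of the counterexample; yours is self-contained for this claim, needing only the existence of the cycle $C$ with all outdegrees equal to $s+1$ and the elementary counting consequence of Lemma~\ref{lem:easy}, and it does not require $s>1$.
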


	\begin{proof}
		Assume for the sake of contradiction that $n\geq 2s+1$ and $d_{2s+1} < s+2$. Since $d_{2s+1}\ge d_{s}=s+1$, 
		we have $d_{2s+1}=s+1$, meaning that~\ref{it:2} yields the existence 
		of some $j\in [2s+2, n]$ with $d_j\ge j$. Let $D^\star$ be the digraph that arises 
		from $D$ if its $s-1$ vertices of outdegree $1$ are removed. The nondecreasing outdegree 
		sequence $(e_1, \ldots, e_{n-s+1})$ of $D^\star$
		satisfies $e_1\geq 2$ and $e_{j-s+1}\ge j-s+1$. Since $j-s+1\ge s+3\ge 4$ this tells 
		us that the triple $(D^\star, 1, 1)$ violates the minimality of $(D, 1, s)$.
	\end{proof}

	Notice that the graph $D- V(C)$ cannot contain a cycle. Since it is not empty, it needs 
	to contain some vertex $z$ without any outneighbours by Lemma~\ref{lem:easy}. 

	If $z$ had outdegree $1$ in $D$, then $z\neq x$ and its unique outneighbour $a$ had to lie on $C$. 
	In particular, the outdegree of $a$ had to be $s+1$, which in view of 
	Claim~\ref{clm:contract}\ref{it:con2} entailed that~$D$ had at least $s+2$ vertices 
	the outdegree of which is $s+1$, contrary to Claim~\ref{clm:final}.
		
	By~\ref{it:3},~\ref{it:4}, this tells us 
	that the outdegree $\ell$ of $z$ in $D$ is consequently at least $s+1$. Since $z$ has 
	no outneighbours in $D-V(C)$, the length of~$C$ needs to be at least $\ell$ and possibly 
	together with $z$ itself this yields $s+2$ vertices of $D$ whose outdegree is $s+1$. 
	We have thereby reached the final contradiction that concludes the proof of 
	Theorem~\ref{thm:main}.~\qed
	
\begin{bibdiv}
\begin{biblist}

\bib{N}{article}{
   author={Alon, Noga},
   title={Disjoint directed cycles},
   journal={Journal of Combinatorial Theory, Series B},
   volume={68},
   date={1996},
   number={2},
   pages={167--178},
   issn={0095-8956},
   review={\MR{1417794}},
   doi={10.1006/jctb.1996.0062},
}
	
	\bib{BT} {article}{
		author = {Bermond, Jean Claude},
		author = {Thomassen, Carsten},
		title={Cycles in digraphs--a survey},
		journal={Journal of Graph Theory},
		volume={5},
		number={1},
		pages={1--43},
		year={1981},
		publisher={Wiley Online Library}
	}

\bib{BLS1}{article}{
	title={Two proofs of Bermond-Thomassen conjecture for regular tournaments},
	author={Bessy, St{\'e}phane}, 
author = { Lichiardopol, Nicolas},
author={Sereni, Jean S{\'e}bastien},
	journal={Electronic Notes in Discrete Mathematics},
	volume={28},
	pages={47--53},
	year={2007},
	publisher={Elsevier}
}

\bib{BLS2}{article}{
	title={Two proofs of the Bermond--Thomassen conjecture for tournaments with bounded minimum in-degree},
	author={Bessy, St{\'e}phane}, 
	author = { Lichiardopol, Nicolas},
	author={Sereni, Jean S{\'e}bastien},
	journal={Discrete Mathematics},
	volume={310},
	number={3},
	pages={557--560},
	year={2010},
	publisher={Elsevier}
}
	
	\bib{CH}{article}{
		title={On the maximal number of independent circuits in a graph},
		author={Corr\'adi, Kereszt{\'e}ly},
		author= {Hajnal, Andr{\'a}s},
		journal={Acta Mathematica Hungarica},
		volume={14},
		number={3-4},
		pages={423--439},
		year={1963},
		publisher={Akad{\'e}miai Kiad{\'o}, co-published with Springer Science+ Business Media BV~É}
	}
	
\bib{Chv}{article}{
   author={Chv\'{a}tal, V.},
   title={On Hamilton's ideals},
   journal={J. Combinatorial Theory Ser. B},
   volume={12},
   date={1972},
   pages={163--168},
   issn={0095-8956},
   review={\MR{294155}},
   doi={10.1016/0095-8956(72)90020-2},
}

\bib{LPS}{article}{
   author={Lichiardopol, Nicolas},
   author={P\'{o}r, Attila},
   author={Sereni, Jean S\'{e}bastien},
   title={A step toward the Bermond-Thomassen conjecture about disjoint
   cycles in digraphs},
   journal={SIAM Journal on Discrete Mathematics, Society for Industrial and Applied Mathematics}
   volume={23},
   date={2009},
   number={2},
   pages={979--992},
   issn={0895-4801},
   review={\MR{2519939}},
   doi={10.1137/080715792},
}

\bib{T}{article}{
	title={Disjoint cycles in digraphs},
	author={Thomassen, Carsten},
	journal={Combinatorica},
	volume={3},
	number={3},
	pages={393--396},
	year={1983},
	publisher={Springer}
}

\end{biblist}
\end{bibdiv}

\end{document}